\documentclass[runningheads,envcountsame,a4paper]{llncs} 

\usepackage{amssymb,amsmath}
\usepackage{wasysym}
\usepackage{graphicx}
\usepackage{epsfig}
\usepackage{latexsym}
\usepackage[english]{babel}
\usepackage[utf8]{inputenc}
\usepackage{csquotes}
\usepackage{algorithm} 
\usepackage{algpseudocode}
\usepackage{tikz}
\usetikzlibrary{arrows,shapes.geometric,positioning,calc}
\usepackage[colorinlistoftodos]{todonotes}
\usepackage{changepage}
\usepackage{subfig}
\usepackage{soul}
\usepackage{hyperref}
\usepackage{todonotes}
\usepackage{enumerate}


\newtheorem{thm}{Theorem}
\newtheorem{prop}[thm]{Proposition}
\newtheorem{cor}[thm]{Corollary}
\newtheorem{lem}[thm]{Lemma}

\newtheorem{exa}[thm]{Example}

\newsavebox{\qedB}

\sbox{\qedB}{\setlength{\unitlength}{1mm}
 \begin{picture}(4,4)(0,0)
  \thinlines
  {\put(0,0){\framebox(2.83,2.83){}}}%
  {\put(1.17,1.17){\framebox(2.83,2.83){}}}%
  {\put(0,0){\framebox(4,4){}}}%
  {\put(1.17,1.17){{\rule{1ex}{1ex} }}}%
 \end{picture}}
 
\newcommand{\QEDB}{\ifmmode\def\next{\tag"\usebox{\qedB}"}%
 \else\let\next=\relax
 {\unskip\nobreak\hfil\penalty50
 \hskip2em\hbox{}\nobreak\hfil\usebox{\qedB}
 \parfillskip=0pt \finalhyphendemerits=0\penalty-100\bigskip}\fi\next}

\newcommand{\Alphabet}{\hbox{\rm Alph}}

\newcommand{\Pref}{\hbox{\rm Pref}}
\newcommand{\Suff}{\hbox{\rm Suff}}

\newcommand{\fac}{\hbox{\rm Fac}}

\newcommand{\N}{\mathbb N}

\newcommand{\prim}{\hbox{\rm Prim}}
\newcommand{\bprop}{\begin{prop}}
\newcommand{\eprop}{\end{prop}}
\newcommand{\bcor}{\begin{cor}}
\newcommand{\ecor}{\end{cor}}
\newcommand{\blem}{\begin{lem}}
\newcommand{\elem}{\end{lem}}

\title{On the number of $k$-powers in a finite word}
\toctitle{Square complexity}
\author{Shuo LI\inst{1}}

\institute{Laboratoire de Combinatoire et d'Informatique Mathématique,\\
Université du  Québec \`a Montréal,\\
CP 8888 Succ. Centre-ville, Montréal (QC) Canada H3C 3P8\\
\email{shuo.li.ismin@gmail.com} 
}

\begin{document}

\maketitle


\begin{abstract}

This note is an attempt to attack a conjecture of Fraenkel and Simpson stated in 1998 concerning the number of distinct squares in a finite word. By counting the number of (right-)special factors, we give an upper bound of the number of {\em $k$-powers} in a finite word for any integer $k\geq 3$. By {\em $k$-power}, we mean a word of the form $\underbrace{uu...u}_{k \; \text{times}}$. 

\end{abstract}

\section{Introduction and notation}
Given a finite word, the problem of counting the number of distinct squares was introduced by Fraenkel and Simpson. In~\cite{FraenkelS98} they conjectured that the number of distinct squares in
a finite word $w$ is bounded by its length $|w|$ and they proved that this number is bounded by $2|w|$. After that Ilie~\cite{Ilie} strengthened this bound to $2|w|-\Theta(n)$; Lam~\cite{lam}
improved this result to $\frac{95}{48}|w|$; Deza, Franek and Thierry~\cite{DezaFT15}
achieved a bound of $\frac{11}{6}|w|$; Thierry~\cite{thie} refined this bound to $\frac{3}{2}n$. A basic fact about the square-counting problem is that no more than two squares can have their last occurrence starting at the same position, this fact is proved in~\cite{FraenkelS98} using the three squares lemma of Crochemore and Rytter~\cite{crry}. After that, the ideas of improving the bound of distinct squares in a finite word are about counting the number of positions at which there exist two different squares having their last occurrence starting. In this article, instead of studying the same motif, we propose to consider the number of special factors. The idea is from the fact that (almost) every occurrent factor can be associated with a (right-)special factor. Even though we can not achieve an injection from the set of squares to the set of special factors in the finite word. This correspondence leads an upper bound of the number of $k$-powers in the given word. The main result of this note is announced as follows:

\begin{thm}
\label{th:main}
Let $k$ be an integer larger than 2. For any finite word $w$, let $N_k(w)$ denote the number of its distinct non-empty factors of the form $\underbrace{uu...u}_{k\; \text{times}}$, let $|w|$ denote the length of $w$ and let $|\Alphabet(w)|$ denote the number of distinct letters in $w$. We then have $$N_k(w) \leq \frac{|w|-|\Alphabet(w)|}{k-2}.$$
\end{thm}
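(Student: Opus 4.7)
The plan combines two ingredients: an upper bound on the weighted count of right-special factors in $w$, and an assignment of $k-2$ right-special ``certificates'' to each $k$-power in $w$.

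For the first ingredient, define $r(u) := |\{a \in \Alphabet(w) : ua \in \fac(w)\}|$ for each non-empty $u \in \fac(w)$, and call $u$ \emph{right-special} when $r(u) \geq 2$. Since every factor of length $n+1$ is the right-extension of a unique factor of length $n$, one has $p_w(n+1) = \sum_{|u|=n} r(u)$, so the telescoping sum $\sum_{n\geq 1}(p_w(n+1)-p_w(n)) = -|\Alphabet(w)|$, split according to $r(u) \geq 2$, $r(u) = 1$, or $r(u) = 0$, yields
$$\sum_{\substack{u \in \fac(w) \\ r(u)\geq 2}}(r(u)-1) \;=\; T(w)-|\Alphabet(w)| \;\leq\; |w|-|\Alphabet(w)|,$$
where $T(w)$ is the number of right-terminal factors ($r(u)=0$); each such factor is a suffix of $w$, so $T(w) \leq |w|$.

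For the second ingredient, fix a $k$-power $v^k \in \fac(w)$ and consider its last occurrence starting at position $i$. Let $L$ be the largest integer with $w[i..i+k|v|+L-1]$ a prefix of $v^\omega$; necessarily $L < |v|$, for otherwise $v^k$ would reoccur at $i+|v|$. Writing $q := v[0..L-1]$ and $g_j := v^j q$ for $j \in \{2,\ldots,k-1\}$, the factor $g_j$ appears at position $i$ followed by $v[L]$ and at position $i+(k-j)|v|$ followed by $b := w[i+k|v|+L]$; the maximality of $L$ forces $b \neq v[L]$, which makes $g_j$ right-special and produces the ``certificate'' $(g_j, b)$ for this $k$-power.

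The remaining step is to establish injectivity: no pair $(g,b)$ should be claimed by two distinct $k$-powers. If two distinct $k$-powers $v^k$ and $u^k$ yield the same $(g,b)$, then $g$ admits both periods $|v|$ and $|u|$; since $|g| \geq 2\max(|v|,|u|)$, Fine--Wilf's theorem forces $u$ and $v$ to be powers of a common primitive root, and a short bookkeeping argument inside the resulting periodic run rules out the coincidence. Combined with a symmetric treatment of the boundary case in which the last occurrence of $v^k$ extends to the end of $w$ (replacing ``last occurrence / right-special'' throughout by the mirror notions of ``first occurrence / left-special''), this gives
$$(k-2)N_k(w) \;\leq\; \sum_{r(u)\geq 2}(r(u)-1) \;\leq\; |w|-|\Alphabet(w)|,$$
as required. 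The main obstacle I anticipate is the injectivity analysis combined with the boundary treatment: a $k$-power lying in a highly periodic region of $w$ may have few or no right-special or left-special factors to claim, and one then has to fall back on a direct arithmetic periodicity argument (reminiscent of the unary case $w=b^n$, where the bound reduces to $(k-2)\lfloor n/k\rfloor \leq n-1$) to cover the residual cases without double-counting certificates.
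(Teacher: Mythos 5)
Your strategy---certifying each $k$-power by $k-2$ right-special factors of the form $v^jq$ with $j\geq 2$, proving injectivity across distinct primitive roots via Fine--Wilf, and bounding the total number of certificates by the telescoping factor-complexity identity---is essentially the strategy of the paper. However, the two steps you defer do not go through as described. First, the injectivity of the assignment $v^k\mapsto\{v^jq : 2\leq j\leq k-1\}$ genuinely fails for distinct $k$-powers sharing a primitive root, so no ``short bookkeeping argument'' can rule the coincidence out, because the coincidence actually occurs: take $k=5$ and $w=(ab)^{10}ac$. Then $(ab)^5$ and $\bigl((ab)^2\bigr)^5$ are distinct $5$-powers, both have $q=a$ and breaking letter $c$, and the certificate $(ab)^4a$ is claimed by both ($j=4$ for the root $ab$, $j=2$ for the root $(ab)^2$); the union of the two certificate sets has only $5$ elements where your count requires $6$. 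The repair is to pool certificates by primitive root, which is what the paper does: writing $m(p)=\max\{i : p^i\in\fac(w)\}$ for primitive $p$, the $k$-powers whose root is a power of $p$ number $\lfloor m(p)/k\rfloor$, the pooled certificates $p^jq$ with $2\leq j\leq m(p)-1$ number roughly $m(p)-2$, and $(k-2)\lfloor m(p)/k\rfloor\leq m(p)-2$ whenever $m(p)\geq k$. (Equivalently, assign to the $j$-th such power the exponent block $\{(j-1)(k-2)+2,\dots,j(k-2)+1\}$ rather than $\{2,\dots,k-1\}$ scaled by $j$.)

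Second, your treatment of the boundary case---the last occurrence of $v^k$ running to the end of $w$, so that no breaking letter $b$ exists---by switching to left-special, first-occurrence certificates does not combine with your first ingredient: the right-special and left-special weighted counts are each bounded by $|w|-|\Alphabet(w)|$, but mixing the two families only yields $(k-2)N_k(w)\leq 2\bigl(|w|-|\Alphabet(w)|\bigr)$ unless you show they draw on a common budget; and in a highly periodic word such as $a^n$ \emph{every} $k$-power is in the boundary case, which is exactly the residual situation you leave open. The paper removes this case split entirely with a sentinel: append a letter $*\notin\Alphabet(w)$ and work in $w^*=w*$. The letter following any occurrence ending at position $|w|$ is then $*$, which automatically breaks the period, so every certificate is right-special in $w^*$; and the telescoping count for $w^*$ still gives $M(w^*)\leq |w^*|+C_{w^*}(|w^*|+1)-C_{w^*}(1)=|w|-|\Alphabet(w)|$, since the extra unit of length is cancelled by the extra letter in the alphabet. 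With these two repairs your argument coincides with the paper's.
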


\section{Preliminaries}

Let us recall the basic terminology about words. By {\em word} we mean a finite concatenation of symbols $w = w_1 w_2 \cdots w_{n}$, with $n \in \N$. The {\em length} of $w$, denoted $|w|$, is $n$ and we say that the symbol $w_i$ is at {\em position} $i$. The set $\Alphabet(w)=\left\{w_i| 1\leq i \leq n\right\}$ is called the {\em alphabet} of $w$ and its elements are called {\em letters}. Let $|\Alphabet(w)|$ denote the cardinality of $\Alphabet(w)$. A word of length $0$ is called the {\em empty word} and it is denoted by $\varepsilon$. For any word $u$, we have $u=\varepsilon u=u\varepsilon$.	Let $u,v$ be two different words, we say $a$ is {\em shorter} (resp. {\em longer}) than $b$ if $|a| < |b|$ (resp. $|a| > |b|$).

A word $u$ is called a {\em factor} of $w$ if $w = pus$ for some words $p,s$.
When $p = \varepsilon$ (resp. $s = \varepsilon$) $u$ is called a {\em prefix} (resp. {\em suffix}) of $w$.
The set of all factors (resp. prefixes, resp. suffixes) of $w$ is denoted by $\fac(w)$ (resp. $\Pref(w)$, resp. $\Suff(w)$). For any integer $i$ satisfying $1 \leq i \leq |w|$, let $C_w(i)$ denote the number of distinct factors of length $i$ in $w$.

A factor $u$ of $w$ is called right-special if there exist two different letters $a, b \in \Alphabet(w)$ such that $ua$ and $ub$ are both factors of $w$. $ua,ub$ are called {\em right-extensions} of $u$.

For any natural number $k$, the {\em $k$-th power} of a finite word $u$ is denoted by $u^k = u u \cdots u$ and consists of the concatenation of $k$ copies of $u$.
A finite word $w$ is said to be {\em primitive} if it is not a power of another word, that is if $w=u^k$ implies $k=1$. Let $\prim(w)$ denote the set of all primitive factors of $w$.
A \emph{$k$-power} is a word $w$ satisfying $w = \underbrace{uu...u}_{k\; \text{tilmes}}$ for a certain $u \in \fac(w)$ and for a $k \geq 2$. Let $N_k(w)$ denote the number of its distinct non-empty $k$-powers. For a given word $w$ and two positive integers $a,b$ satisfying $a \leq b=|w|$, let us define $w^{\frac{a}{b}}$ to be the prefix of $w$ of length $a$. Now we can define the rational power a word: let $w$ be a finite word and let $\frac{a}{b} \in \mathbb{Q}^+$ be a positive rational number, $w^{\frac{a}{b}}$ is well defined only if $b=|w|$, and in this case, there is a couple of non-negative integers $(c,d)$ satisfying $a=c|w|+d$, we define $w^{\frac{a}{b}}$ to be $w^cw^{\frac{d}{b}}$. For a given word $w$ and a given integer $k$, we say that $w$ is of {\em period} $k$ if there exists a word $u$ of length $k$ such that $w=u^{\frac{|w|}{k}}$.\\

Here we recall a basic lemma concerning the repetitions:
\begin{lem}[Fine and Wilf~\cite{fiwi}]
\label{period}
Let $w$ be a word having $k$ and $l$ for periods. If $|w| \geq k+l-\gcd (k,l)$ then $\gcd(k,l)$ is also a period of $w$.
\end{lem}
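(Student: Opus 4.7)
The plan is to prove Lemma~\ref{period} by strong induction on the sum $k+l$. In the base case $k=l$ there is nothing to show, since $\gcd(k,l)=k=l$ is already a period. For the inductive step, assume without loss of generality that $k>l>0$. The strategy is to first establish that $w$ admits $k-l$ as an additional period, and then to invoke the induction hypothesis on the pair $(l,\,k-l)$. The gcd is preserved, $\gcd(l,\,k-l)=\gcd(k,l)$, and the length hypothesis passes through since $|w|\geq k+l-\gcd(k,l)\geq l+(k-l)-\gcd(l,\,k-l)$. The conclusion of the induction then delivers $\gcd(k,l)$ as a period of $w$.

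The crux is therefore to prove that $k-l$ is a period, that is, $w_i=w_{i+k-l}$ for every $i$ with $1\leq i\leq |w|-(k-l)$. Two complementary two-step chains handle most positions. If $i+k\leq |w|$, then period $k$ gives $w_i=w_{i+k}$ and period $l$ then gives $w_{i+k}=w_{i+k-l}$. If instead $i\geq l+1$, then period $l$ gives $w_i=w_{i-l}$ and period $k$ then gives $w_{i-l}=w_{i-l+k}=w_{i+k-l}$. The two cases together cover every admissible $i$ as soon as $|w|\geq k+l$, which is slightly stronger than our hypothesis.

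When $|w|<k+l$, the uncovered positions form a gap $\{|w|-k+1,\ldots,l\}$ whose size $k+l-|w|$ is, by hypothesis, at most $\gcd(k,l)$. For a position $i$ in this gap one must chain several period-$k$ and period-$l$ transitions, mixing forward and backward moves, to link $w_i$ with $w_{i+k-l}$. A convenient way to formalise this is to view the positions $\{1,\ldots,|w|\}$ as vertices of a graph whose edges join any two positions forced equal by one of the periods, and to show via a B\'ezout-type argument that positions congruent modulo $\gcd(k,l)$ lie in the same connected component; the bound $|w|\geq k+l-\gcd(k,l)$ is exactly what ensures that some witnessing chain never leaves $\{1,\ldots,|w|\}$.

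The main obstacle is precisely this boundary analysis. The direct two-step chains fail inside a window of width up to $\gcd(k,l)$, and showing that a longer alternating chain of $\pm k$ and $\pm l$ shifts stays inside the word requires tracking partial sums carefully against the length bound. This is the step in which the sharpness of Fine and Wilf's hypothesis is felt, and I expect the technical weight of the proof to sit there.
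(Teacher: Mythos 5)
First, note that the paper does not prove this statement at all: Lemma~\ref{period} is the theorem of Fine and Wilf, quoted with a citation to~\cite{fiwi} and used as a black box, so there is no in-paper proof to compare against. Your proposal must therefore stand on its own, and as written it has a genuine gap. The inductive skeleton (reduce $(k,l)$ to $(l,k-l)$, check that the gcd and the length hypothesis pass through) is fine, and your two-step chains correctly show that $k-l$ is a period of $w$ whenever $|w|\geq k+l$. But the entire content of Fine and Wilf is the regime $k+l-\gcd(k,l)\leq |w|<k+l$, and for the positions in the window $\{|w|-k+1,\dots,l\}$ you only assert that some alternating chain of $\pm k$ and $\pm l$ shifts stays inside $\{1,\dots,|w|\}$; you do not exhibit it, and you say yourself that you expect the technical weight to sit there. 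Worse, the connectivity claim you invoke (``positions congruent modulo $\gcd(k,l)$ lie in the same component'') is, once proved, already equivalent to the theorem, so the induction would then be superfluous; as it stands you have proved the easy weakening with hypothesis $|w|\geq k+l$ and deferred the actual statement.

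The standard repair keeps your induction but changes where the hypothesis is applied, precisely so that the boundary window never appears. Instead of trying to show that $k-l$ is a period of all of $w$, let $u$ be the prefix of $w$ of length $|w|-l$. Then $u$ inherits period $l$, and $u$ has period $k-l$ by exactly your two-step chain ($u_i=w_i=w_{i+k}=w_{i+k-l}=u_{i+k-l}$), which is valid for every $i\leq |u|-(k-l)=|w|-k$ with no exceptional positions. Since $|u|=|w|-l\geq k-\gcd(k,l)=(k-l)+l-\gcd(l,k-l)$, the induction hypothesis applies to $u$ and gives $u$ period $d=\gcd(k,l)$. Finally one propagates $d$ from $u$ back to $w$: for $1\leq i\leq |w|-d$, reduce $i$ and $i+d$ modulo $l$ into $\{1,\dots,l\}\subseteq\{1,\dots,|u|\}$ (legitimate because $w$ has period $l$), land on two positions of $u$ congruent modulo $d$ (because $d\mid l$), and conclude by the period $d$ of $u$; the inequality $|u|\geq k-d\geq l$ is what makes this reduction land inside $u$. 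I would accept your write-up only after this boundary issue is resolved, either by this prefix trick or by an honest execution of the chain argument you sketch.
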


\section{Number of right-special factors}

Let $w$ be a finite word. In this section, we consider the word $w^*$ obtained by concatenating a special letter $*$ at the end of $w$, with the condition that $*\not \in \Alphabet(w)$. 

For any $u \in \fac(w)$, let us define $m_w(u)=\max\left\{i|u^i \in \fac(w), i \in \mathbb{Q}^+\right\}$, and similarly, let us define $m_{w^*}(u)=\max\left\{i|u^i \in \fac(w^*), i \in \mathbb{Q}^+\right\}$. There are the following facts between $m_w(u)$ and $m_{w^*}(u)$:

1) For any factor $u \in \fac(w)$, $m_w(u)=m_{w^*}(u)\geq 1$;

2) If $u \in \fac(w^*)$ but $u \not \in \fac(w)$, $m_{w^*}(u)=1$.

For any factor $u \in \fac(w)$, let us define $m(u)=m_{w}(u)=m_{w^*}(u)$. For any integer $i$ satisfying $1 \leq i \leq m(u)$, let us define $u(i)$ to be the shortest suffix of $u^{m(u)}$ containing $u^i$ as a prefix.

\begin{exa}
Let us consider the following word $$w=abababacababa.$$ For $u=ab$,$m(u)=\frac{7}{2}$, thus $u(1),u(2),u(3)$ are all well-defined, we have $u(1)=aba$, $u(2)=ababa$ and $u(3)=abababa$. 
For $v=abab$, we then have $m(v)=\frac{7}{4}$, thus only $v(1)$ is well-defined, we have $v(1)=ababa$.
\end{exa} 

\begin{lem}
\label{special}
Let $w$ be a finite word and let $u\in \fac(w)$, if $m(u)\geq 2$, then, for any integer $i$ satisfying $1 \leq i \leq m(u)-1$, the factor $u(i)$ is a right-special factor in $w^*$. 
\end{lem}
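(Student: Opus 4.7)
The plan is to exhibit two distinct letters $\alpha, \beta$ with $u(i)\alpha, u(i)\beta \in \fac(w^*)$, both obtained from a single occurrence of $u^{m(u)}$ in $w$ by exploiting the fact that $u^{m(u)}$ is periodic of period $|u|$.

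First, I would fix any occurrence of $u^{m(u)}$ in $w$. Since $u(i)$ is by construction a suffix of $u^{m(u)}$, one occurrence of $u(i)$ in $w^*$ ends exactly where this occurrence of $u^{m(u)}$ ends. Call the letter immediately following it in $w^*$ by $\alpha$; depending on whether $u^{m(u)}$ reaches position $|w|$ or not, $\alpha$ is either the sentinel $*$ or some letter of $w$. To produce a second occurrence of $u(i)$, I would first verify the length bound $|u(i)| \leq (m(u)-1)|u|$: by the minimality in the definition of $u(i)$, it suffices to exhibit \emph{some} suffix of $u^{m(u)}$ of length $(m(u)-1)|u|$ that has $u^i$ as a prefix, and by periodicity the suffix of $u^{m(u)}$ starting at position $|u|+1$ is exactly $u^{m(u)-1}$, which does contain $u^i$ as a prefix precisely because $i \leq m(u)-1$. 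With this bound in hand, shifting the first occurrence of $u(i)$ to the left by $|u|$ positions still lies inside $u^{m(u)}$, and by periodicity it reads $u(i)$ again. Let $\beta$ denote the letter immediately to the right of this second occurrence, which is still inside $u^{m(u)}$; again by periodicity, $\beta$ is the unique letter satisfying $u^{m(u)}\beta = u^{m(u)+1/|u|}$.

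Finally, I would argue $\alpha \neq \beta$. If $\alpha = *$ this is immediate since $\beta \in \Alphabet(w)$. Otherwise $\alpha \in \Alphabet(w)$, and the equality $\alpha = \beta$ would give $u^{m(u)+1/|u|} = u^{m(u)}\alpha \in \fac(w)$, contradicting the maximality in the definition of $m(u)$. Hence $u(i)\alpha$ and $u(i)\beta$ are two distinct right-extensions of $u(i)$ in $w^*$, so $u(i)$ is right-special. The only non-routine step I anticipate is the length bound $|u(i)| \leq (m(u)-1)|u|$ --- everything else is just bookkeeping of periodic positions and the maximality clause --- and this bound reduces to the observation that $u^{m(u)-1}$ admits $u^i$ as a prefix exactly when $i \leq m(u)-1$, which is the whole content of the hypothesis.
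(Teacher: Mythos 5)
Your proof is correct and follows essentially the same route as the paper: both arguments exhibit two occurrences of $u(i)$ inside a single occurrence of $u^{m(u)}$, one followed by the letter that continues the period and one followed by either the sentinel $*$ or a letter ruled out by the maximality of $m(u)$. The only (cosmetic) difference is that you obtain the second occurrence by shifting the suffix occurrence left by $|u|$ --- justified by your explicit length bound $|u(i)| \leq (m(u)-1)|u|$ --- whereas the paper uses the prefix occurrence of $u(i)$ in $u^{m(u)}$.
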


\begin{proof}
Let $w=w_1w_2...w_{|w|}$ and let $w^*=w^*_1w^*_2...w^*_{|w|+1}$, with $w_i=w^*_i$ for all $i$ satisfying $1 \leq i \leq |w|$ and $w_{|w|+1}=*$. As $u^{m(u)} \in \fac(w)$, there exists an integer $k$ satisfying $1 \leq k \leq |w|$ such that $u^{m(u)}=w_kw_{k+1}...w_{k+m(u)|u|-1}$. For any integer $i$ satisfying $1 \leq i \leq m(u)-1$, $u(i)$ occurs at least twice in $u^{m(u)}$ as respectively a prefix and a suffix of $u^{m(u)}$. If we suppose that $|u(i)|=l$, then $$u(i)=w_kw_{k+1}...w_{k+l-1}=w_{k+m(u)|u|-l-1}w_{k+m(u)|u|-l}...w_{k+m(u)|u|-1}.$$ Now let us prove that $u(i)$ is a right-special factor of $w^*$. As $u^{m(u)} \in \fac(w)$, $w^*_kw^*_{k+1}...w^*_{k+m(u)|u|-1}w^*_{k+m(u)|u|}$ is well-defined and it is a factor of $w^*$. We claim that $w^*_{k+l} \neq w^*_{k+m(u)|u|}$. In fact, if $k+m(u)|u|=|w|+1$, then $w^*_{k+m(u)|u|}=* \neq w^*_{k+l}$; if $k+m(u)|u|<|w|+1$, 
$$w^*_kw^*_{k+1}...w^*_{k+m(u)|u|-1}w^*_{k+m(u)|u|}=w_kw_{k+1}...w_{k+m(u)|u|-1}w_{k+m(u)|u|} \in \fac(w),$$
in this case if $w_{k+l-1} = w_{k+m(u)|u|}$, then $w_kw_{k+1}...w_{k+m(u)|u|-1}w_{k+m(u)|u|}=u^{m(u)+\frac{1}{|u|}}$, this contradicts the maximality of $m(u)$. As a consequence, $w^*_{k+l-1} \neq w^*_{k+m(u)|u|}$ in both cases, thus $$w^*_kw^*_{k+1}...w^*_{k+l}\neq w^*_{k+m(u)|u|-l-1}w^*_{k+m(u)|u|-l}...w^*_{k+m(u)|u|}.$$ Hence, $u(i)$ is a right-special factor in $w^*$.
\end{proof}

\begin{exa}
Let us consider the word given in the previous example: $$w=abababacababa.$$ $w^*$ is to be $$w^*=abababacababa*.$$ For $u=ab$,$m(u)=\frac{7}{2}$, thus $u(1),u(2)$ are right-special factors in $w^*$. In fact, $(ab)^iab$, $(ab)^iac$ and $(ab)^ia*$ are all factors of $w^*$ for $i=1,2$. 
For $v=abab$, $m(v)=\frac{7}{4}$. $v(1)$ is also a right-special factor of $w^*$ because $v(1)=ababa=u(2)$. However, it is not counted in the lemma, because $m(v)-1 <1$.
\end{exa}

\begin{lem}
\label{prim}
For any couple of different primitive factors $(u , v) \in \fac(w)^2$ satisfying $m(u), m(v) \geq 3$, we have $$\left\{u(i)| 2 \leq i \leq m(u)-1\right\} \cap \left\{v(i)| 2 \leq i \leq m(v)-1\right\}=\emptyset $$. 
\end{lem}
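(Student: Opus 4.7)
\medskip\noindent\textbf{Proof proposal.} The plan is to argue by contradiction. Suppose there exist a word $W$ and indices $i,j$ with $2 \leq i \leq m(u)-1$ and $2 \leq j \leq m(v)-1$ such that $W = u(i) = v(j)$; I will deduce $u = v$, contradicting the hypothesis that $u$ and $v$ are distinct.

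The first step is to equip $W$ with two periods. Directly from the definition of $u(i)$, the word $W$ is a suffix of $u^{m(u)}$ beginning with $u^i$, so $|W| \geq i|u| \geq 2|u|$; moreover, being a factor of $u^{m(u)}$, it inherits the period $|u|$. Symmetrically $|W| \geq 2|v|$ and $W$ has period $|v|$. Combining the two lower bounds,
$$|W| \geq 2\max(|u|,|v|) \geq |u|+|v| \geq |u|+|v|-\gcd(|u|,|v|),$$
so the hypothesis of Lemma~\ref{period} (Fine and Wilf) is satisfied, and $d := \gcd(|u|,|v|)$ is also a period of $W$.

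Primitivity then closes the argument. Since $i \geq 1$, the word $u$ is a prefix of $W$, and hence $u$ has period $d$. Because $d$ divides $|u|$, this means $u = t^{|u|/d}$ where $t$ is the length-$d$ prefix of $u$. Primitivity of $u$ forces $|u|/d = 1$, i.e., $|u| = d$. The symmetric reasoning applied to $v$ gives $|v| = d$, so $u$ and $v$ are both equal to the length-$d$ prefix of $W$, contradicting $u \neq v$.

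I do not foresee any real obstacle. The role of the hypothesis $m(u), m(v) \geq 3$ is precisely to guarantee that the index ranges $\{2,\ldots,m(u)-1\}$ and $\{2,\ldots,m(v)-1\}$ are non-empty and that admissible indices satisfy $i,j \geq 2$, which is exactly what buys the length bounds $|W| \geq 2|u|$ and $|W| \geq 2|v|$ needed to invoke Fine and Wilf. With only $i = 1$ (or $j = 1$) available, the length hypothesis of the Fine--Wilf lemma could fail, and nothing in the argument would survive.
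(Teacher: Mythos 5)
Your proof is correct and follows essentially the same route as the paper's: derive the two periods $|u|$ and $|v|$ on the common word, use $i,j\geq 2$ to meet the length hypothesis of the Fine--Wilf lemma, and then let primitivity force $u=v$. You actually spell out the length computation and the final primitivity step more explicitly than the paper does, but the argument is the same.
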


\begin{proof}
If there exist two different primitive factors $u,v$ and two integers $i,j$ such that $u(i)=v(j)$ then $u^iu'=v^jv'$ with $u',v'$ respectively a prefix of $u$ and $v$. From the hypothesis that $i\geq 2,j \geq 2$ and Lemma~\ref{period}, there exists a factor $p$ such that $u,v$ are both a power of $p$, this fact contradicts the primitivities of $u,v$.
\end{proof}

\begin{cor}
\label{number}
Let $w$ be a finite word and let $M(w^*)$ denote the number of right-special factors in $w^*$, then we have
$$\sum_{u \in \prim(w)} (m(u)-2) \leq M(w^*).$$ 
\end{cor}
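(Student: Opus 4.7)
The plan is to combine Lemmas~\ref{special} and~\ref{prim} in a direct disjoint-union counting argument. For each primitive factor $u\in\prim(w)$ with $m(u)\ge 3$, Lemma~\ref{special} yields that every factor $u(i)$ with integer $i$ satisfying $2\le i\le m(u)-1$ is right-special in $w^*$. The number of such indices $i$ accounts for the term $m(u)-2$ appearing in the sum, so each such primitive contributes that many right-special factors of $w^*$.

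Next I would invoke Lemma~\ref{prim} to guarantee that the families $\{u(i):2\le i\le m(u)-1\}$ and $\{v(j):2\le j\le m(v)-1\}$ produced by two distinct primitive factors $u\neq v$ (both with $m(\cdot)\ge 3$) are pairwise disjoint. This lets me take the union of all these families over primitives and conclude that the cardinality of the union is exactly $\sum_{u}(m(u)-2)$, bounding $M(w^*)$ from below. Primitive factors with $m(u)<3$ contribute a term strictly less than $1$ (possibly negative) to the left-hand side, so including them only weakens the inequality and does not disturb its validity; consequently the sum may be taken over all of $\prim(w)$ as stated.

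The only step requiring careful bookkeeping is the fractional nature of $m(u)$: when $m(u)\notin\mathbb{Z}$, the exact number of integers in the interval $[2,m(u)-1]$ is $\lfloor m(u)\rfloor-2$ rather than $m(u)-2$, so a final verification is needed to reconcile this integer count with the rational term appearing in the statement (either by interpreting $(m(u)-2)$ as the integer count of valid indices, or by a small auxiliary check closing the fractional gap). Beyond that minor point, the proof is essentially a one-line application of the two preceding lemmas.
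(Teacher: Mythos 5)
Your argument is essentially identical to the paper's proof: it forms the set $\left\{u(i)\,\middle|\,u\in\prim(w),\ 2\le i\le m(u)-1\right\}$, applies Lemma~\ref{special} to see that its elements are right-special in $w^*$ and Lemma~\ref{prim} to see that they are pairwise distinct, and then counts. The fractional bookkeeping you flag at the end is a genuine issue rather than a formality --- the set contributes only $\lfloor m(u)\rfloor-2$ elements per primitive $u$, and a primitive with $2<m(u)<3$ adds a strictly positive amount to the left-hand side while contributing nothing to the set, so contrary to your remark it does not merely ``weaken'' the inequality --- but the paper's own proof is no more careful, as it simply asserts that the cardinality of the set is exactly $\sum_{u\in\prim(w)}(m(u)-2)$.
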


\begin{proof}
Let us consider the set of factors of $w$: $$s=\left\{u(i)|u \in \prim(w), 2 \leq i \leq m(u)-1, u(i) \in \fac(w) \right\}.$$ From Lemma~\ref{special}, the elements in $s$ are all right-special factors in $w^*$ and from Lemma~\ref{prim}, the cardinality of $s$ is exactly  $\sum_{u \in \prim(w)} (m(u)-2)$.
\end{proof}

\section{Proof of the main theorem}

\begin{proof}[ of Theorem~\ref{th:main}]

Let $k$ be an integer larger than $2$, we have
$$N_k(w)=\sum_{u \in \prim(w)}\lfloor \frac{m(u)}{k}\rfloor,$$ where $\lfloor x \rfloor$ represents the largest integer smaller than or equal to $x$. On the other hand, for any primitive $u$ satisfying $m(u) \geq k$, we have $\frac{m(u)}{k}\leq \frac{m(u)-2}{k-2}$. Hence,

\begin{align*}
N_k(w)&=\sum_{u \in \prim(w)}\lfloor \frac{m(u)}{k}\rfloor\\
&=\sum_{\substack{u \in \prim(w)\\m(u) \geq k}}\lfloor \frac{m(u)}{k}\rfloor\\
&\leq \sum_{\substack{u \in \prim(w)\\m(u) \geq k}} \frac{m(u)-2}{k-2}\\
&\leq \sum_{u \in \prim(w) }\frac{m(u)-2}{k-2}\\
&\leq \frac{M(w^*)}{k-2},
\end{align*}
where $M(w^*)$ denote the number of right-special factors in $w^*$.

Now let us give an upper bound of $M(w^*)$. First, we claim that if $u \in \fac(w^*)$ is a right-special factor, then $u \in \fac(w)$. Otherwise, $u$ is a suffix of $w^*$ and it does not have any right-extensions. Now, for any right-special factor of $w^*$ of length $i$, it has at least two right-extensions, and the suffix of $w^*$ of length $i$ is not right-special. Thus, if we let $s(w^*)(i)$ denote the number of right-special factors of $w^*$ of length $i$, we then have $s(w^*)(i) \leq C_{w^*}(i+1)-C_{w^*}(i)+1$, where $C_{w^*}(i)$ is the number of distinct factors of length $i$ in $w^*$ defined in the section Preliminaries. Hence,
 
\begin{align*}
M(w^*)&=\sum_{i=1}^{|w^*|}s(w^*)(i)\\
&\leq \sum_{i=1}^{|w^*|}C_{w^*}(i+1)-C_{w^*}(i)+1\\
&\leq |w|+1+C_{w^*}(|w^*|+1)-C_{w^*}(1)\\
&\leq |w|-|\Alphabet(w)|.
\end{align*}

Thus, $$N_k(w) \leq \frac{|w|-|\Alphabet(w)|}{k-2}.$$
\end{proof}

\section{Concluding Remarks}
\label{sec:concl}

The result obtained in the main theorem is not sharp. Let us consider the word $w=aaa$, it is easy to check that $N_3(w)=1$ while the bound given in Theorem~\ref{th:main} is $2$. The author believes that the problem is from the way we count the number of right-special factors. From Lemme~\ref{special} we prove that for a given primitive $u$, all $u(i)$ are right-special if $1 \leq i \leq m(u)-1$. However, in Lemma~\ref{prim} we count just the $u(i)$ for $2 \leq i \leq m(u)-1$. In fact, we can only prove that the words of the form $u(i)$ are pairwisely different when $i \geq 2$. Meanwhile, we can have two different primitives $u,v$ such that $u(1)=v(i)$ for some positive integer $i$. Thus, further work to to be done to investigate that $$\sum_{u \in \prim(w)} (m(u)-2) \leq M(w^*).$$
If the previous  inequality holds, we may expect to prove 
$$N_k(w) \leq \frac{|w|-|\Alphabet(w)|}{k-1}.$$


\bibliographystyle{splncs03}
\bibliography{biblio}

\end{document}